\newtheorem{theorem}{Theorem}
\newtheorem{proposition}[theorem]{Proposition}
\newtheorem{corollary}[theorem]{Corollary}
\newtheorem{claim}{Claim}
\begin{document}
\onehalfspace

\title{Low Weight Perfect Matchings}
\author{Stefan Ehard\thanks{Funded by the Deutsche Forschungsgemeinschaft (DFG, German Research Foundation) - 388217545}
\and Elena Mohr
\and Dieter Rautenbach}
\date{}

\maketitle
\vspace{-10mm}
\begin{center}
{\small 
Institute of Optimization and Operations Research, Ulm University,\\ 
Ulm, Germany, \texttt{$\{$stefan.ehard,elena.mohr,dieter.rautenbach$\}$@uni-ulm.de}}
\end{center}

\begin{abstract}
Answering a question posed by Caro, Hansberg, Lauri, and Zarb,
we show that for every positive integer $n$
and every function $\sigma\colon E(K_{4n})\to\{-1,1\}$ 
with $\sigma\left(E(K_{4n})\right)=0$, 
there is a perfect matching $M$ in $K_{4n}$
with $\sigma(M)=0$.
Strengthening a result of Caro and Yuster,
we show that for every positive integer $n$ 
and every function $\sigma\colon E(K_{4n})\to\{-1,1\}$ with 
$\left|\sigma\left(E(K_{4n})\right)\right|<n^2+11n+2,$
there is a perfect matching $M$ in $K_{4n}$
with $|\sigma(M)|\leq 2$.
Both these results are best possible.\\[3mm]
{\bf Keywords:} zero sum subgraph; perfect matching
\end{abstract}

\section{Introduction}

In~\cite{cahalaza} Caro, Hansberg, Lauri, and Zarb 
considered connected graphs $G$ 
together with a function $\sigma\colon E(G)\to\{-1,1\}$ 
labeling the edges of $G$ with $-1$ or $+1$,
and they studied conditions that imply the existence 
of different types of spanning trees $T$ with 
$$|\sigma(E(T))|=\Big|\sum\limits_{e\in E(T)}\sigma(e)\Big|\leq 1.$$
As a variation of this problem, they ask whether,
for every positive integer $n$
and every labeling $\sigma\colon E(K_{4n})\to\{-1,1\}$ 
of the edges of the complete graph $K_{4n}$ of order $4n$
with $\sigma\left(E(K_{4n})\right)=0$, 
there is a perfect matching $M$ in $K_{4n}$
with $\sigma(M)=0$.
We answer their question in the affirmative.

\begin{theorem}\label{thm:main}
For every positive integer $n$
and every function $\sigma\colon E(K_{4n})\to\{-1,1\}$ 
with $\sigma\left(E(K_{4n})\right)=0$, 
there is a perfect matching $M$ in $K_{4n}$
with $\sigma(M)=0$.
\end{theorem}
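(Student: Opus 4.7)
My plan is to argue by contradiction: assume no perfect matching $M$ of $K_{4n}$ satisfies $\sigma(M)=0$. Since $|M|=2n$ is even, $\sigma(M)$ is always even, so under the assumption $|\sigma(M)|\ge 2$ for every PM. A standard double count gives $\sum_M\sigma(M)=(4n-3)!!\cdot\sigma(E(K_{4n}))=0$, so some PM $M_0$ satisfies $\sigma(M_0)=+2$ (the case $\sigma(M_0)=-2$ is symmetric). The rest of the argument extracts strong structural information from the fact that no local modification of $M_0$ can produce a PM with $\sigma=0$.

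The first batch of constraints comes from $4$-swaps. Write $e_i=a_ib_i$ for the edges of $M_0$, and split $M_0=A\sqcup B$ with $A=\{e_i:\sigma(e_i)=+1\}$ of size $n+1$ and $B$ of size $n-1$. For $e_i,e_j\in A$ the two non-trivial $K_4$-matchings on $\{a_i,b_i,a_j,b_j\}$ give $\sigma$-changes $2p_{ij}-2$ and $2q_{ij}-2$, where $p_{ij}=\sigma(a_ia_j)=\sigma(b_ib_j)$ and $q_{ij}=\sigma(a_ib_j)=\sigma(b_ia_j)$; forbidding change $-2$ forces these pairwise equalities. For $e_i\in A$ and $f=cd\in B$, the same forbidding says that neither pair $\{a_ic,b_id\}$ nor $\{a_id,b_ic\}$ is entirely $-1$.

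Next I would analyse $6$-swaps on three edges $e_i,e_j,e_k\in A$. Once the $4$-swap equalities are imposed, the non-identity perfect matchings of $K_6$ on their six endpoints yield only four genuinely new $\sigma$-sums, such as $p_{ij}+p_{jk}+q_{ik}$; each must differ from $1$. These inequalities imply first that $e_i\sim e_j\Leftrightarrow p_{ij}=q_{ij}=+1$ is an equivalence relation on $A$, and second, applied to a triple $(i_1,i_2,j)$ with $i_1,i_2$ in one class of size at least two, that $p_{i_1j}=q_{i_1j}=p_{i_2j}=q_{i_2j}$, and this common value must be $-1$ (otherwise $j$ would join that class). Thus within each class $C$ the induced subgraph on $V_C=\bigcup_{i\in C}\{a_i,b_i\}$ is a complete $(+1)$-clique, and between a non-singleton class and any other class every edge of $K[V_A]$ is $-1$; singleton--singleton pairs admit a few more sign patterns and need a finer analysis of the same $6$-swap inequalities.

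The final step is a global count. One decomposes $\sigma(E(K_{4n}))=\sigma(E[V_A])+\sigma(E(V_A,V_B))+\sigma(E[V_B])$ and uses the class structure, together with further $6$-swap constraints mixing $A$- and $B$-edges, to tie down $\sigma(E(V_A,V_B))$ and $\sigma(E[V_B])$. The $(A,B)$-$4$-swap condition alone gives $\sigma(E(V_A,V_B))\ge 0$, and each class of size $c\ge 2$ contributes $\binom{2c}{2}$ positively to $\sigma(E[V_A])$. The main obstacle is to verify that these positive contributions, combined with the refined mixed $6$-swap bounds, strictly outweigh the negative contributions from between-class $V_A$-edges and from $E[V_B]$, so that $\sigma(E(K_{4n}))\neq 0$ in every admissible configuration, contradicting $\sigma(E(K_{4n}))=0$. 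The base case $n=1$ (where $B=\emptyset$) already exhibits the mechanism: the two possible partitions of $A$ into classes force $\sigma(E(K_4))\in\{-2,+2,+6\}$, directly incompatible with $\sigma(E(K_4))=0$.
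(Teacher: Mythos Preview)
Your setup and the $4$-swap constraints are correct and coincide with the paper's Claims~1 and~3, and the equivalence-class structure on $A$ that you extract from the $6$-swaps on three $A$-edges is a genuinely pleasant observation (transitivity and the ``all $-1$ between a non-singleton class and the rest'' statement do follow from the four inequalities $p_{ij}+p_{jk}+q_{ik}\neq 1$, etc.). However, what you have written is a plan, not a proof. You explicitly label the global count as ``the main obstacle'' and do not carry it out; but this count is exactly where the difficulty of the theorem lies. Your structural information controls only $E[V_A]$ modulo the unresolved singleton--singleton patterns; on $E(V_A,V_B)$ you have only the crude bound $\sigma(E(V_A,V_B))\ge 0$, and on $E[V_B]$ you have nothing at all, so as stated there is no reason the three pieces cannot sum to zero. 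A smaller gap: the averaging identity $\sum_M\sigma(M)=0$ only gives a perfect matching with $\sigma>0$, not one with $\sigma=+2$; the reduction to $\sigma(M_0)=2$ requires an additional argument (take $M$ minimizing $|\sigma(M)|$, then use a swap together with an edge count as in the paper's Claims~2 and~3).

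For comparison, the paper organizes the endgame differently. After the analogues of your $4$-swap constraints it does \emph{not} build an equivalence relation on $A$; instead it introduces the notion of a \emph{special} edge $e^+\in M^+$ (one such that every minus-edge inside $V(M^+)$ meets $e^+$; in your language this says $A\setminus\{e^+\}$ lies in a single $\sim$-class) and splits into two cases according to whether every good matching has a special edge or some good matching does not. In each case a short, explicit edge count against $|\sigma^{-1}(+1)|=4n^2-n$ (Claims~4--5 in Case~1, a parity count of minus-edges in $V(M^+)$ in Case~2), combined with one or two further local swaps, yields the contradiction. The point is that this dichotomy lets the paper pin down the $(A,B)$-crossing edges and the $B$-side exactly in Case~1, and reduce Case~2 to a parity obstruction, thereby avoiding the open-ended count that your outline leaves unfinished.
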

Under the hypothesis of Theorem~\ref{thm:main},
the existence of a perfect matching $M$ in $K_{4n}$ 
with $|\sigma(M)|\leq 2$
already follows from more general results due to Caro and Yuster, cf.~Theorem~1.1 in~\cite{cayu}.
More precisely, Caro and Yuster showed that the weaker hypothesis
$\left|\sigma\left(E(K_{4n})\right)\right|\leq 2(4n-1)$ suffices
for the existence of such a perfect matching $M$ 
with $|\sigma(M)|\leq 2$.
As observed in~\cite{cahalaza},
for infinitely many positive integers $n$,
there are functions $\sigma\colon E(K_{4n})\to\{-1,1\}$ 
with $\sigma\left(E(K_{4n})\right)=4\sqrt{n}-2$
such that $\sigma(M)\not=0$ for every perfect matching $M$ in $K_{4n}$.
Slightly modifying their construction, we obtain the following proposition,
which implies that Theorem~\ref{thm:main} is best possible for infinitely many values of $n$.

\begin{proposition}\label{thm:optimality}
For infinitely many positive integers $n$, 
there is a function $\sigma\colon E(K_{4n})\to\{-1,1\}$ 
with $\sigma\left(E(K_{4n})\right)=2$
such that $\sigma(M)\not=0$
for every perfect matching $M$ in $K_{4n}$.
\end{proposition}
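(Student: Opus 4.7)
My plan is to exhibit an explicit construction parametrized by an integer $s\geq 2$. Set $n=s^2-1$; I will show that this $n$ admits a labeling as required. Partition $V(K_{4n})$ into two parts $A$ and $B$ with $|A|=2n+s$ and $|B|=2n-s$, and define $\sigma$ by assigning $+1$ to every edge inside $A$ or inside $B$ and $-1$ to every edge between $A$ and $B$.

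The first step is a direct edge count: the $+1$-edges number $\binom{2n+s}{2}+\binom{2n-s}{2}$ and the $-1$-edges number $(2n+s)(2n-s)$. A short expansion, using $s^2=n+1$, gives $\sigma(E(K_{4n}))=2s^2-2n=2$, as required.

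The second and main step is a parity obstruction. For any perfect matching $M$ of $K_{4n}$, let $y$ denote the number of edges of $M$ that cross between $A$ and $B$. Counting the vertices of $A$ covered by $M$ yields $|A|\equiv y\pmod 2$, so $y\equiv s\pmod 2$. On the other hand, $\sigma(M)=(2n-y)-y=2n-2y$, so $\sigma(M)=0$ would force $y=n$. Since $s^2\equiv s\pmod 2$, the integers $n=s^2-1$ and $s$ have opposite parities, so $y=n$ is impossible, and therefore $\sigma(M)\neq 0$ for every perfect matching $M$. Letting $s$ range over $\{2,3,4,\dots\}$ produces infinitely many valid values of $n$.

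I do not foresee a substantial obstacle here: the construction is essentially the one attributed in the excerpt to Caro, Hansberg, Lauri, and Zarb, with the partition sizes tuned to the asymmetric values $2n\pm s$ so that the total weight lands at exactly $2$ rather than $4\sqrt{n}-2$; the identity $s^2\equiv s\pmod 2$ then automatically makes the perfect-matching parity constraint block a zero sum. The only step requiring a small observation is spotting the right parameter choice $s=\sqrt{n+1}$; after that, everything reduces to routine verification.
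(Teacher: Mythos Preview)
Your argument is correct and follows essentially the same bipartition-plus-parity idea as the paper's proof. The only superficial differences are that the paper uses the complementary sign convention (crossing edges get $+1$, interior edges get $-1$) and the parametrization $n=m^2+1$ with part sizes $2n\pm m$, whereas you use $n=s^2-1$ with part sizes $2n\pm s$; these are the two natural ways to make the total weight land exactly at $2$, and the parity obstruction works the same way in both.
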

Considering the construction in the proof of Proposition~\ref{thm:optimality} suggests that 
zero weight perfect matchings 
are excluded rather by parity reasons 
than by the imbalance $\left|\sigma\left(E(K_{4n})\right)\right|$
of $\sigma$.
We confirm this with our second main result showing
that much weaker conditions on the imbalance imply 
the existence of low weight perfect matchings.

\begin{theorem}\label{thm:2}
For every positive integers $n$ and $k$
with $k\geq 2$,
and every function $\sigma\colon E(K_{4n})\to\{-1,1\}$ with 
$\left|\sigma\left(E(K_{4n})\right)\right|<
n(n-1)+k(6n-1)+k^2,$
there is a perfect matching $M$ in $K_{4n}$
with $|\sigma(M)|\leq 2k-2$.
\end{theorem}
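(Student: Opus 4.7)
I would proceed by contradiction. Suppose that every perfect matching $M$ of $K_{4n}$ satisfies $|\sigma(M)| \ge 2k$. Since any two perfect matchings are connected by a sequence of $2$-swaps, each of which alters $\sigma(M)$ by an amount in $\{-4,-2,0,2,4\}$, and since $k \ge 2$, tracking $\sigma(M)$ along a swap path from a positive-weight matching to a negative-weight one would force some intermediate matching with $|\sigma(M)| \le 2 < 2k$. Hence I may assume $\sigma(M) \ge 2k$ for every perfect matching $M$. Fix a minimum such $M$, and split $M = M_+ \cup M_-$ according to the sign of $\sigma$. A short additional swap argument lets me assume $\sigma(M) = 2k$, so that $|M_+| = n+k$ and $|M_-| = n-k$.

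The standard swap analysis then yields the structural core. A $2$-swap on a pair in $M_+$ would strictly decrease $\sigma(M)$ unless all four cross edges between the two matching edges are $+1$; hence $K[V(M_+)]$ is monochromatic $+1$, contributing $(n+k)(2n+2k-1)$ to $\sigma(E(K_{4n}))$. A mixed $2$-swap on a pair $(e,f) \in M_+ \times M_-$ would strictly decrease $\sigma(M)$ unless both alternative $2$-edge matchings on $V(e) \cup V(f)$ have non-negative weight, so at most two of the four cross edges of $(e,f)$ are labelled $-1$; summing over such pairs gives that the cross edges between $V(M_+)$ and $V(M_-)$ contribute $\ge 0$ to $\sigma(E(K_{4n}))$. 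Together with the trivial bound $\sigma(K[V(M_-)]) \ge -(n-k)(2n-2k-1)$, these estimates recover the Caro--Yuster lower bound $\sigma(E(K_{4n})) \ge 2k(4n-1)$ from~\cite{cayu}.

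To obtain the additional $(n-k)(n-k-1)$ demanded by Theorem~\ref{thm:2}, I would exploit \emph{weight-preserving} mixed swaps. Whenever some alternative $2$-matching on $V(e) \cup V(f)$ has weight exactly $0$ (one $+1$-edge and one $-1$-edge), performing it produces a distinct minimum perfect matching $M'$ whose $V(M'_+)$ is obtained from $V(M_+)$ by replacing one vertex of $e$ by one vertex of $f$. Applying the $K[V(M'_+)]$-monochromaticity to $M'$ forces the incoming $V(M_-)$-vertex to have all but at most one of its edges to $V(M_+)$ labelled $+1$. A direct case analysis of the four cross edges of $(e,f)$ yields the following dichotomy for each $v \in V(M_-)$: either $v$ admits such a promotion swap, in which case $v$ has at most one $-1$-edge to $V(M_+)$, or else the $M$-partner $\bar v$ of $v$ has all of its edges to $V(M_+)$ labelled $+1$.

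The main obstacle is to turn this dichotomy into the required quantitative improvement. I plan to classify the edges of $M_-$ by which of their endpoints admit a promotion, to bound the $-1$ cross-edges incident to each edge of $M_-$ accordingly, and then to iterate: each ``heavy'' $V(M_-)$-vertex that cannot be promoted forces its partner to behave like a $V(M_+)$-vertex with respect to the remaining structure, and re-running the promotion argument on the corresponding alternative minimum matching propagates the monochromaticity into further portions of $V(M_-)$. Careful bookkeeping, together with the per-pair constraints established above, should upgrade the estimate to $\sigma(\text{cross}) + \sigma(K[V(M_-)]) \ge -(n-k)^2$, which combined with $\sigma(K[V(M_+)]) = (n+k)(2n+2k-1)$ yields $\sigma(E(K_{4n})) \ge n(n-1) + k(6n-1) + k^2$, contradicting the hypothesis. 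The step I expect to require the most care is ensuring this iterative promotion remains consistent across different alternative minimum matchings without double-counting the edges they share.
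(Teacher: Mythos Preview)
Your set-up is sound: the swap-path argument correctly forces all perfect matchings to have $\sigma(M)$ of one sign (here $\geq 2k$), and for a minimum such $M$ the monochromaticity of $K[V(M_+)]$ and the ``at most two minus cross-edges per mixed pair'' observation are both valid, recovering the Caro--Yuster bound $2k(4n-1)$. Your promotion dichotomy is also correct: if $\bar v$ has any $-1$-edge into $V(M_+)$, then the forced swap is weight-preserving and promotes $v$, whence $v$ has at most one $-1$-edge into $V(M_+)$. (One small slip: you cannot in general arrange $\sigma(M)=2k$ exactly---the minimum may be $2k'>2k$---but since the target bound is monotone in $k$ this is harmless.)

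The genuine gap is the final step. Your dichotomy does \emph{not} bound the number of $-1$ cross-edges at a vertex $v$ whose partner $\bar v$ happens to have none: in the extremal configuration (the $-1$-graph is the complement of $K_{3n+k}$ together with $n-k$ isolated vertices) every edge of $M_-$ has exactly this shape, and each hub vertex $v$ carries the full $2(n+k)$ minus cross-edges. Your ``iterative promotion'' would therefore have to certify, for each such $f\in M_-$, that one endpoint can be absorbed into a growing plus-clique of size $3n+k$; making this rigorous across all $n-k$ edges simultaneously, while controlling the demotions and avoiding circularity between different minimum matchings, amounts to proving the relevant case of the Erd\H{o}s--Gallai theorem from scratch. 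Nothing in your outline explains how to do this bookkeeping, and it is exactly where the difficulty lies.

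By contrast, the paper bypasses all of this structural analysis. It never looks at a minimum matching or proves monochromaticity. Instead it observes directly that if every perfect matching has $\sigma(M)\geq 2k$, then the graph $G=(V,\sigma^{-1}(-1))$ has matching number at most $n-k$: a maximum matching $M^-$ in $G$ leaves an independent set in $G$, which can be perfectly matched by $+1$-edges, yielding a perfect matching of $K_{4n}$ with weight $2n-2\nu(G)$. One then simply quotes the Erd\H{o}s--Gallai bound $m(G)\le\binom{4n}{2}-\binom{3n+k}{2}$, which is exactly the estimate you are trying to derive by hand. The missing idea in your proposal is this reduction to a matching-number bound plus the Erd\H{o}s--Gallai extremal theorem; your swap machinery, while correct as far as it goes, is working much harder than necessary and does not reach the target without it.
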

For $k=2$, Theorem~\ref{thm:2} implies 
the following strengthening of the above-mentioned consequence 
of the result of Caro and Yuster.

\begin{corollary}\label{corollary1}
For every positive integer $n$ 
and every function $\sigma\colon E(K_{4n})\to\{-1,1\}$ with 
$\left|\sigma\left(E(K_{4n})\right)\right|<n^2+11n+2,$
there is a perfect matching $M$ in $K_{4n}$
with $|\sigma(M)|\leq 2$.
\end{corollary}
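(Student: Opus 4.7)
The plan is to obtain Corollary~\ref{corollary1} as the direct specialization of Theorem~\ref{thm:2} at $k=2$. Since Theorem~\ref{thm:2} is valid for every integer $k\geq 2$, this choice is admissible, and the entire argument reduces to checking that, for $k=2$, the hypothesis and the conclusion of Theorem~\ref{thm:2} match those of the corollary.

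First, I would substitute $k=2$ into the threshold appearing in Theorem~\ref{thm:2} and simplify:
\[
n(n-1)+k(6n-1)+k^{2}\Big|_{k=2}\;=\;n(n-1)+2(6n-1)+4\;=\;n^{2}-n+12n-2+4\;=\;n^{2}+11n+2,
\]
so the imbalance hypothesis $|\sigma(E(K_{4n}))|<n(n-1)+k(6n-1)+k^{2}$ reduces to exactly the hypothesis $|\sigma(E(K_{4n}))|<n^{2}+11n+2$ stated in the corollary. Next, $2k-2$ evaluates to $2$ at $k=2$, so the matching-weight bound $|\sigma(M)|\leq 2k-2$ becomes $|\sigma(M)|\leq 2$, again matching the conclusion of the corollary.

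There is essentially no obstacle here: all of the work is carried by Theorem~\ref{thm:2}, and Corollary~\ref{corollary1} is simply a bookkeeping consequence. The only additional remark worth making is that $n^{2}+11n+2$ strictly exceeds the Caro--Yuster threshold $2(4n-1)=8n-2$ for every positive integer $n$, so Corollary~\ref{corollary1} does genuinely strengthen the consequence of the Caro--Yuster theorem described after Theorem~\ref{thm:main}; this observation is not needed for the proof but confirms that the promised strengthening is non-trivial.
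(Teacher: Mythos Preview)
Your proposal is correct and matches the paper's own approach: the paper states explicitly that Corollary~\ref{corollary1} is Theorem~\ref{thm:2} specialized to $k=2$, and your arithmetic verification of the threshold $n(n-1)+2(6n-1)+4=n^{2}+11n+2$ and the bound $2k-2=2$ is exactly what is needed.
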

Both, Theorem~\ref{thm:2} and, hence, also 
Corollary~\ref{corollary1} are best possible.
If, for instance, 
$\sigma\colon E(K_{4n})\to\{-1,1\}$
is such that the graph
$\left(V(K_ {4n}),\sigma^{-1}(1)\right)$
consists of a clique of order $3n+2$ and $n-2$ isolated vertices,
then $|\sigma(M)|\geq 4$ for every perfect matching $M$ in $K_{4n}$
while $\left|\sigma\left(E(K_{4n})\right)\right|=n^2+11n+2$.

All proofs are given in the next section.

For a survey concerning related results,
we refer the reader to~\cite{ca}
and the introduction of~\cite{cahalaza}.

\section{Proofs}

We start with the proof of our first main result.

\begin{proof}[Proof of Theorem~\ref{thm:main}]
We suppose, for a contradiction, 
that $\sigma\colon E(K_{4n})\to\{-1,1\}$ 
is such that $\sigma\left(E(K_{4n})\right)=0$
but that $\sigma(M)\neq 0$ for every perfect matching $M$ in $K_{4n}$.
First, we consider the case $n=1$.
The edge set of $K_4$ is the union 
of three edge-disjoint perfect matchings 
$M_1$, $M_2$, and $M_3$.
Since $\sigma(M_i)\neq 0$ for every $i$,
we obtain $\sigma(M_i)\in \{ -2,2\}$,
which implies the contradiction
$\sigma(E(K_{4n}))=\sigma(M_1)+\sigma(M_2)+\sigma(M_3)\neq 0$.
Hence, we may assume that $n\geq 2$.
We call an edge $e$ 
a {\it plus-edge} if $\sigma(e)=1$, 
and a {\it minus-edge} if $\sigma(e)=-1$.
Since $\sigma(E(K_{4n}))=0$,
\begin{eqnarray}\label{e1}
\mbox{\it there are exactly 
$\frac{1}{2}{4n\choose 2}=4n^2-n$
plus-edges and minus-edges in $K_{4n}$, respectively.}
\end{eqnarray}
For a matching $M$, we denote by $M^+$ and $M^-$ 
the sets of plus-edges and minus-edges in $M$, respectively.  
We choose a perfect matching $M$ in $K_{4n}$ 
such that $|\sigma(M)|$ is as small as possible.
Possibly replacing $\sigma$ with $-\sigma$,
we may assume that $\sigma(M)>0$.
Since $M$ contains $2n$ edges, 
$\sigma(M)$ is even, which implies $\sigma(M)\geq 2$.

We start with some easy observations.

\begin{claim}\label{claim1}
For every two edges $e$ in $M^+$ and $f$ in $M^-$, 
there are no two disjoint minus-edges between $e$ and $f$.
In particular,
there are at most two minus-edges between $e$ and $f$.
\end{claim}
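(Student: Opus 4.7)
The plan is to exploit the minimality of $|\sigma(M)|$ by a local switching argument. Let $e=uv\in M^+$ and $f=xy\in M^-$; the edges of $K_{4n}$ joining $\{u,v\}$ to $\{x,y\}$ form a copy of $K_{2,2}$ with edge set $\{ux,uy,vx,vy\}$. Two disjoint edges between $e$ and $f$ are precisely a perfect matching of this $K_{2,2}$, that is, either $\{ux,vy\}$ or $\{uy,vx\}$.

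For the first assertion, I would argue by contradiction. Suppose that, say, $ux$ and $vy$ are both minus-edges. Then
\[
M' := (M\setminus\{e,f\})\cup\{ux,vy\}
\]
is again a perfect matching in $K_{4n}$, and
\[
\sigma(M') \;=\; \sigma(M)-\sigma(e)-\sigma(f)+\sigma(ux)+\sigma(vy) \;=\; \sigma(M)-1-(-1)+(-1)+(-1) \;=\; \sigma(M)-2.
\]
Since $\sigma(M)\geq 2$, this gives $0\leq \sigma(M')=\sigma(M)-2$, hence $|\sigma(M')|<|\sigma(M)|$, contradicting the choice of $M$. The case $\{uy,vx\}$ is symmetric.

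For the "in particular" clause, note that the four edges between $e$ and $f$ form a $4$-cycle on $\{u,v,x,y\}$, and any three edges of a $4$-cycle contain a perfect matching of the cycle, i.e., two disjoint edges. Thus, if there were three or more minus-edges between $e$ and $f$, two of them would be disjoint, contradicting the first assertion. So at most two minus-edges appear between $e$ and $f$.

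I do not foresee a real obstacle here; the only subtlety is that the switching decreases $\sigma(M)$ by exactly $2$, which is harmless precisely because the hypothesis $\sigma(M)\geq 2$ keeps $\sigma(M')\geq 0$ so that the absolute value genuinely drops. The claim is purely a minimality-based local exchange observation that will be used as a structural constraint in the subsequent counting arguments.
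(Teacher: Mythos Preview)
Your proof is correct and matches the paper's argument essentially line for line: the same local switch $M'=(M\setminus\{e,f\})\cup\{e',f'\}$ giving $\sigma(M')=\sigma(M)-2\geq 0$ and contradicting minimality. Your added explanation of the ``in particular'' clause via the $4$-cycle is a helpful elaboration of what the paper leaves implicit.
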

\begin{proof}[Proof of Claim~\ref{claim1}] 
If there are two disjoint minus-edges $e'$ and $f'$ 
between two edges $e\in M^+$ and $f\in M^-$, 
then 
the perfect matching 
$N=(M\setminus \{ e,f\})\cup \{ e',f'\}$
satisfies
$0\leq \sigma(N)=\sigma(M)-2<\sigma(M)$,
contradicting the choice of $M$.
\end{proof}

\begin{claim}\label{claim2}
There are two edges $e$ and $f$ in $M^+$ 
such that there exists a minus-edge between $e$ and $f$.
\end{claim}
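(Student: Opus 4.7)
The plan is to argue by contradiction, assuming that for every pair of edges $e,f\in M^+$, none of the four edges between $e$ and $f$ is a minus-edge. Under this assumption the edges within $V^+:=V(M^+)$ are extremely restricted: every edge inside $V^+$ is either a matching edge from $M^+$ itself, which is a plus-edge by definition, or an edge joining two distinct edges of $M^+$, which is a plus-edge by our contradictory assumption. Thus all $\binom{2p}{2}$ edges within $V^+$ are plus-edges, where $p:=|M^+|$.

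Next, I would use Claim~\ref{claim1} to control the edges between $V^+$ and $V^-:=V(M^-)$. For every pair $(e,f)$ with $e\in M^+$ and $f\in M^-$, Claim~\ref{claim1} gives at most two minus-edges among the four edges joining $e$ and $f$, hence at least two plus-edges. Summing over all $pm$ such pairs, where $m:=|M^-|$, at least $2pm$ of the edges between $V^+$ and $V^-$ are plus-edges. Adding the two contributions yields the lower bound
$$\binom{2p}{2}+2pm \;=\; p(2p-1)+2pm \;=\; p\bigl(2(p+m)-1\bigr) \;=\; p(4n-1)$$
on the total number of plus-edges in $K_{4n}$.

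Finally, I would exploit $\sigma(M)\geq 2$: since $p+m=2n$ and $p-m=\sigma(M)\geq 2$, we have $p\geq n+1$, and therefore
$$p(4n-1)\;\geq\;(n+1)(4n-1)\;=\;4n^2+3n-1\;>\;4n^2-n,$$
contradicting the exact count of plus-edges recorded in~(\ref{e1}). The only potentially delicate point is ensuring that Claim~\ref{claim1} is applicable even in the degenerate case $m=0$, but there the lower bound reduces to $\binom{4n}{2}=2n(4n-1)$, which trivially exceeds $4n^2-n$, so no separate argument is needed. I do not foresee a genuine obstacle; the main task is just to package the two simple bounds into the algebraic identity $\binom{2p}{2}+2pm=p(4n-1)$ and plug in $p\geq n+1$.
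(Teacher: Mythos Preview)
Your proposal is correct and follows essentially the same argument as the paper: assume no minus-edge lies within $V(M^+)$, so all $\binom{2p}{2}$ edges there are plus-edges, use Claim~\ref{claim1} to get at least half (that is, $2pm$) of the cross edges as plus-edges, combine into $p(4n-1)$, and contradict~(\ref{e1}) via $p\geq n+1$. The only differences are cosmetic (your notation $p,m$ versus the paper's $m^+$, and your explicit remark on the harmless $m=0$ case).
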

\begin{proof}[Proof of Claim~\ref{claim2}] 
Suppose, for a contradiction, that there is no minus-edge in $V(M^+)$;
that is, more formally, the subgraph of $K_{4n}$ induced by 
the set $V(M^+)$ of vertices 
that are incident with a plus-edge from $M$
contains no minus-edge of $K_{4n}$.
For $m^+=|M^+|$, we have $m^+\geq n+1$.
By Claim~\ref{claim1},
at least half the $2m^+(4n-2m^+)$
edges between $V(M^+)$ and $V(M^-)$
are plus-edges, and, hence, 
the total number of plus-edges is at least
$${2m^+\choose 2}+\frac{1}{2}\cdot 2m^+(4n-2m^+)
=(4n-1)m^+\geq (4n-1)(n+1)>4n^2-n,$$
contradicting (\ref{e1}).
\end{proof}

\begin{claim}\label{claim3}
For every two edges $u_1u_2$ and $v_1v_2$ in $M^+$, 
if $u_1v_1$ is a minus-edge, 
then $u_2v_2$ is also a minus-edge.
Furthermore, $\sigma(M)=2$.
\end{claim}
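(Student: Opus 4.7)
The plan is to prove both parts of Claim~\ref{claim3} by swapping arguments analogous to the one used for Claim~\ref{claim1}, exploiting minimality of $|\sigma(M)|$ together with the standing assumption that no perfect matching has weight $0$.

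For the first statement, I will suppose for a contradiction that $u_1u_2$ and $v_1v_2$ lie in $M^+$ and $u_1v_1$ is a minus-edge but $u_2v_2$ is a plus-edge. The swap $N=(M\setminus\{u_1u_2,v_1v_2\})\cup\{u_1v_1,u_2v_2\}$ produces a perfect matching with
$$\sigma(N)=\sigma(M)+\sigma(u_1v_1)+\sigma(u_2v_2)-\sigma(u_1u_2)-\sigma(v_1v_2)=\sigma(M)-2.$$
Since $\sigma(M)\geq 2$, the value $\sigma(N)$ is a nonnegative even integer. If $\sigma(N)=0$, then $N$ is a zero-weight perfect matching, contradicting the overall assumption that none exists. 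If $\sigma(N)>0$, then $0<\sigma(N)<\sigma(M)$, so $|\sigma(N)|<|\sigma(M)|$, contradicting the choice of $M$. Hence $u_2v_2$ must be a minus-edge.

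For the second statement, I will combine Claim~\ref{claim2} with the first part. By Claim~\ref{claim2}, there exist edges $u_1u_2,v_1v_2\in M^+$ together with a minus-edge between them, which after relabeling we may take to be $u_1v_1$. By the first part of Claim~\ref{claim3}, $u_2v_2$ is also a minus-edge. Now the same swap as above gives
$$\sigma(N)=\sigma(M)+\sigma(u_1v_1)+\sigma(u_2v_2)-\sigma(u_1u_2)-\sigma(v_1v_2)=\sigma(M)-4.$$
If $\sigma(M)\geq 4$, then $\sigma(N)\geq 0$; either $\sigma(N)=0$ gives a zero-weight perfect matching, or $0<\sigma(N)<\sigma(M)$ violates minimality of $|\sigma(M)|$. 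Since $\sigma(M)$ is a positive even integer and $\sigma(M)\geq 4$ is impossible, we conclude $\sigma(M)=2$.

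There is no real obstacle here beyond being careful with the parity/sign bookkeeping: one must verify that the swapped matching $N$ cannot have $\sigma(N)<0$ (ruled out by $\sigma(M)\geq 2$ in the first part and $\sigma(M)\geq 4$ in the second), and that hitting $\sigma(N)=0$ contradicts the global assumption of no zero-weight perfect matching rather than merely the minimality of $|\sigma(M)|$. Both statements are thus consequences of exchanging a pair of $M$-edges for the ``cross'' edges and pushing the resulting weight drop against minimality and the standing contradiction hypothesis.
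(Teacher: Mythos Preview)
Your proof is correct and follows essentially the same approach as the paper: the same two swap arguments (replacing $\{u_1u_2,v_1v_2\}$ by $\{u_1v_1,u_2v_2\}$) yielding $\sigma(N)=\sigma(M)-2$ and $\sigma(N)=\sigma(M)-4$, respectively, and then contradicting minimality of $|\sigma(M)|$. The only cosmetic difference is that you explicitly separate the subcase $\sigma(N)=0$ from $\sigma(N)>0$, whereas the paper folds both into ``$0\le \sigma(N)<\sigma(M)$, contradicting the choice of $M$''.
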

\begin{proof}[Proof of Claim~\ref{claim3}] 
If $u_1u_2$ and $v_1v_2$ are two edges in $M^+$
such that $u_1v_1$ is a minus-edge
and $u_2v_2$ is a plus-edge,
then 
the perfect matching 
$N=(M\setminus \{ u_1u_2,v_1v_2\})\cup \{ u_1v_1,u_2v_2\}$
satisfies
$0\leq \sigma(N)=\sigma(M)-2<\sigma(M)$,
contradicting the choice of $M$.
This implies the first part of the statement.
Now, suppose, for a contradiction, 
that $\sigma(M)>2$.
Since $\sigma(M)$ is even, we have $\sigma(M)\geq 4$.
By Claim~\ref{claim2},
there are two edges $u_1u_2$ and $v_1v_2$ in $M^+$
such that $u_1v_1$ and $u_2v_2$ are both minus-edges.
Now, the perfect matching $N$ as above 
satisfies
$0\leq \sigma(N)=\sigma(M)-4<\sigma(M)$,
contradicting the choice of $M$.
This completes the proof of the claim.
\end{proof}
Since $\sigma(M)=2$,
the matching $M$ contains exactly $n+1$ plus-edges
and $n-1$ minus-edges.
We call a perfect matching $N$ in $K_{4n}$ {\it good}
if $\sigma(N)=\sigma(M)$.
If $N$ is a good matching,
then an edge $e^+$ in $N^+$ is called {\it special}
if there is no minus-edge in $V\left(N^+\setminus \{ e^+\}\right)$,
that is, all minus-edges in $V(M^+)$ are adjacent with $e^+$.

We distinguish the following two cases.

\medskip

\noindent {\bf Case 1.} {\it Every good matching contains a special edge.}

\medskip

\noindent Let $e^+$ be a special edge in $M$.

\begin{claim}\label{claim4}
For every edge $e$ in $M^+\setminus \{e^+\}$, 
there exist only minus-edges between $e^+$ and $e$.
\end{claim}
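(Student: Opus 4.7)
The plan is to combine a global count of the minus-edges of $K_{4n}$ with the parity constraint supplied by Claim~\ref{claim3}. Let $m$ denote the number of minus-edges that lie inside the subgraph of $K_{4n}$ induced by $V(M^+)$. Since $e^+$ is special in $M$, every such minus-edge is incident with $\{u_1,u_2\}$, and (being distinct from $e^+=u_1u_2$ itself) its other endpoint lies in a unique edge of $M^+\setminus\{e^+\}$. Hence
\[
 m \;=\; \sum_{e\in M^+\setminus\{e^+\}} \bigl(\text{number of minus-edges between }e^+\text{ and }e\bigr).
\]
By Claim~\ref{claim3} each summand is even, and so lies in $\{0,2,4\}$; in particular $m$ is even and trivially $m\le 4n$.

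Next, I would establish a matching lower bound $m\ge 4n-1$. By~(\ref{e1}) the total number of minus-edges in $K_{4n}$ equals $4n^2-n$. Claim~\ref{claim1} bounds the number of minus-edges between $V(M^+)$ and $V(M^-)$ by $2(n+1)(n-1)$, while the induced subgraph on $V(M^-)$ trivially contains at most ${2n-2 \choose 2}$ minus-edges. Subtracting these two bounds from $4n^2-n$ yields $m\ge 4n-1$ after a short arithmetic check.

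Combining $4n-1\le m\le 4n$ with the evenness of $m$ forces $m=4n$. Since $|M^+\setminus\{e^+\}|=n$ and each summand in the expression for $m$ is at most $4$, the equality $m=4n$ can only hold if every summand equals $4$; that is, for every $e\in M^+\setminus\{e^+\}$ all four edges between $e^+$ and $e$ are minus-edges, which is precisely the statement of Claim~\ref{claim4}.

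The conceptually delicate point I expect is verifying that the counting lower bound lands at exactly $4n-1$ rather than $4n$ or less, so that the parity of $m$ supplied by Claim~\ref{claim3} narrowly but decisively closes the unit gap; this off-by-one is really the heart of the argument. If the counting were any weaker, one would have to fall back on a more intricate, local swap-based analysis within other good matchings, treating separately the cases of $0$ and $2$ minus-edges between $e^+$ and a putative bad $e$.
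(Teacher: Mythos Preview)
Your argument is correct and is essentially the same as the paper's, just organized dually: the paper argues by contradiction and counts plus-edges (if a plus-edge between $e^+$ and some $e$ exists, Claim~\ref{claim3} gives at least two, so at most $4n-2$ minus-edges sit in $V(M^+)$, forcing at least $4n^2-n+1$ plus-edges in total, against (\ref{e1})), whereas you count minus-edges directly and invoke the evenness of each summand from Claim~\ref{claim3} to close the gap $4n-1\le m\le 4n$. The two computations are complementary and use the same ingredients; your lower bound $m\ge 4n-1$ is precisely the contrapositive of the paper's plus-edge overcount.
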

\begin{proof}[Proof of Claim~\ref{claim4}] 
Suppose, for a contradiction,
that there is a plus-edge between $e^+$ 
and some edge $e$ in $M^+\setminus \{e^+\}$.
By Claim~\ref{claim3},
there are at least two plus-edges between $e^+$ and $e$.
Since $e^+$ is special,
it follows that there are at most $4n-2$ minus-edges in $V(M^+)$.
Therefore, by Claim~\ref{claim1},
the total number of plus-edges is at least
$${2n+2\choose 2}-(4n-2)+\frac{1}{2}\cdot (2n+2)(2n-2)
=4n^2-n+1,$$
contradicting (\ref{e1}).
\end{proof}

\begin{claim}\label{claim5}
There is no plus-edge in $V(M^-)$.
Furthermore, there is an edge $e^-$ in $M^-$
such that 
there are 
exactly three plus edges between $e^+$ and $e^-$,
and, for every two edges $e$ in $M^+$
and $e'$ in $M^-$ with $(e,e')\neq (e^+,e^-)$,
there are exactly two plus-edges between $e$ and $e'$.
\end{claim}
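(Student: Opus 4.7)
The plan is to handle both assertions with one global edge-count plus a single carefully chosen multi-edge exchange for each.

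For the absence of plus-edges inside $V(M^-)$, I would first count plus-edges globally. Since $e^+$ is special, Claim~\ref{claim4} forces all $4n$ edges between $e^+$ and $M^+\setminus\{e^+\}$ to be minus, while every other edge inside $V(M^+)$ is plus; this gives $\binom{2n+2}{2}-4n=2n^2-n+1$ plus-edges in $V(M^+)$. Writing $P$ for the number of plus-edges between $V(M^+)$ and $V(M^-)$ and $q$ for the number of plus-edges inside $V(M^-)$, identity~(\ref{e1}) yields $P+q=2n^2-1$; Claim~\ref{claim1} forces the number of plus-edges between any pair $(e,f)\in M^+\times M^-$ to be at least $2$, giving $P\geq 2(n^2-1)$ and hence $q\leq 1$. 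To rule out $q=1$, I would let $u_1v_1$ be the unique plus-edge in $V(M^-)$ (so $u_2v_2$ is a minus-edge), pick any edge $a_1a_2\in M^+\setminus\{e^+\}$ (available because $n\geq 2$), and form
\[
N=\bigl(M\setminus\{e^+,a_1a_2,u_1u_2,v_1v_2\}\bigr)\cup\{x_1a_1,x_2a_2,u_1v_1,u_2v_2\}.
\]
By Claim~\ref{claim4} both $x_1a_1$ and $x_2a_2$ are minus, so the four new edges contribute $-1-1+1-1=-2$ while the four removed edges contribute $1+1-1-1=0$, yielding $\sigma(N)=0$ and contradicting the choice of $M$. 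Thus $q=0$.

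With $q=0$, the identity $P+q=2n^2-1$ together with $p_{ef}\geq 2$ on the $n^2-1$ pairs in $M^+\times M^-$ forces exactly one pair $(e^*,f^*)$ to satisfy $p_{e^*f^*}=3$ and every other pair to satisfy $p_{ef}=2$. To finish I would show $e^*=e^+$ by contradiction. Assume instead that $e^*=a_1a_2\in M^+\setminus\{e^+\}$ and $f^*=y_1y_2$, and WLOG (by relabeling $a_i$ and $y_j$) that the unique minus-edge between $e^*$ and $f^*$ is $a_1y_1$. By Claim~\ref{claim1} the two minus-edges between $e^+$ and $f^*$ share some common vertex $z\in\{x_1,x_2,y_1,y_2\}$. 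For each of the four positions of $z$ I would exhibit a perfect matching on $\{x_1,x_2,a_1,a_2,y_1,y_2\}$ whose three edges have weight sum $-1$; replacing $\{e^+,e^*,f^*\}$ (total weight $1$) by it then produces an exchange $N$ with $\sigma(N)=\sigma(M)-1-1=0$, contradicting the choice of $M$. Concretely, the replacement $\{x_1a_2,\,x_2y_2,\,a_1y_1\}$ works when $z\in\{x_1,y_1\}$, the replacement $\{x_1y_2,\,x_2a_2,\,a_1y_1\}$ works when $z=x_2$, and the replacement $\{x_1a_1,\,x_2y_2,\,a_2y_1\}$ works when $z=y_2$.

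The main obstacle is the case analysis in the second part: one must verify that for each of the four possible positions of $z$ a three-edge replacement summing to exactly $-1$ exists. Since there are only fifteen perfect matchings on six labeled vertices, this is a small finite check; the structural input that makes each case go through is that Claim~\ref{claim4} forces all four edges $x_ia_j$ to be minus, which, combined with the three plus-edges $a_1y_2,a_2y_1,a_2y_2$ provided by $p_{e^*f^*}=3$, leaves just enough flexibility to realize the target signature sum in every subcase.
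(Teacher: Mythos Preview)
Your argument is correct. The counting and the four-edge exchange you use to rule out a plus-edge inside $V(M^-)$ are exactly what the paper does (with the same edges, only different letters; you should say once that $e^+=x_1x_2$).

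Where you genuinely diverge is in proving $e^*=e^+$. The paper avoids your four-case analysis by bringing in a \emph{third} plus matching-edge $e_1\in M^+\setminus\{e^+,e^*\}$ (available because $|M^+|=n+1\ge 3$): it takes two disjoint minus-edges between $e^+$ and $e_1$ (Claim~\ref{claim4}) together with two disjoint plus-edges between $e^*$ and $f^*$ (three plus-edges out of four guarantees such a pair) and performs a single four-edge swap lowering $\sigma$ by~$2$. Your three-edge swap on $\{e^+,e^*,f^*\}$ alone works too, but the price is the case split on the common vertex $z$ of the two minus-edges between $e^+$ and $f^*$; the paper's trick of spending one extra matching edge buys a one-line finish with no cases. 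Conversely, your approach would still go through if only $|M^+|\ge 2$, a robustness the paper's argument does not have (though it is not needed here).
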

\begin{proof}[Proof of Claim~\ref{claim5}] 
By Claim~\ref{claim4},
there are exactly $4n$ minus-edges in $V(M^+)$,
and, hence, exactly ${2n+2\choose 2}-4n$
plus-edges in $V(M^+)$.
By Claim~\ref{claim1},
there are at least $\frac{1}{2}\cdot (2n+2)(2n-2)$
plus-edges between $V(M^+)$ and $V(M^-)$.
Since 
$$\left({2n+2\choose 2}-4n\right)+\frac{1}{2}\cdot (2n+2)(2n-2)
=4n^2-n-1,$$
observation (\ref{e1}) implies the existence of exactly one further plus-edge
not yet accounted for.

Suppose, for a contradiction, that $V(M^-)$ contains a plus-edge $f_1$,
that is, there are two edges $e_1$ and $e_2$ in $M^-$
such that $f_1$ lies between $e_1$ and $e_2$.
Let $f_2$ be the edge between $e_1$ and $e_2$ that is disjoint from $f_1$.
Since, by (\ref{e1}), $f_1$ is the only plus-edge in $V(M^-)$,
it follows that $f_2$ is a minus-edge.
Let $e_3$ be in $M^+\setminus \{e^+\}$, 
and let $f_3$ and $f_4$ be disjoint minus-edges between $e_3$ and $e^+$.
Now, the perfect matching 
$N=(M\setminus \{ e^+,e_1,e_2,e_3\})\cup \{ f_1,f_2,f_3,f_4\}$
satisfies
$\sigma(N)=\sigma(M)-2=0$,
contradicting the choice of $M$.

It follows that there is no plus-edge in $V(M^-)$
and that there are exactly 
$\frac{1}{2}\cdot (2n+2)(2n-2)+1$
plus-edges between $V(M^+)$ and $V(M^-)$.
By Claim~\ref{claim1}, 
this implies that there is an edge $\hat{e}$ in $M^+$
and an edge $e^-$ in $M^-$ such that there are 
exactly three plus edges between $\hat{e}$ and $e^-$,
and, for every two edges $e$ in $M^+$
and $e'$ in $M^-$ with $(e,e')\neq (\hat{e},e^-)$,
there are exactly two plus-edges between $e$ and $e'$.
In order to complete the proof of the claim,
it remains to show that $\hat{e}=e^+$.
Suppose, for a contradiction, that $\hat{e}\not=e^+$.
Since $|M^+|=n+1\geq 3$, 
there is an edge $e_1$ in $M^+\setminus \{ e^+,\hat{e}\}$.
Let $f_1$ and $f_2$ be two disjoint minus-edges between $e^+$ and $e_1$,
and let $f_3$ and $f_4$ be two disjoint plus-edges between $\hat{e}$ and $e^-$.
The perfect matching 
$N=(M\setminus \{ e^+,\hat{e},e^-,e_1\})\cup \{ f_1,f_2,f_3,f_4\}$
satisfies
$\sigma(N)=\sigma(M)-2=0$,
contradicting the choice of $M$.
Hence, $\hat{e}=e^+$,
which completes the proof of the claim.
\end{proof}
Let $f^+$ and $f^-$ be two disjoint edges between $e^+$ and $e^-$
such that $f^+$ is a plus-edge and $f^-$ is a minus-edge.
The perfect matching 
$N=(M\setminus \{ e^+,e^-\})\cup \{ f^+,f^-\}$
satisfies $\sigma(N)=\sigma(M)$,
that is, $N$ is good.
By the assumption in Case 1, 
$N$ contains a special edge.
Since there are exactly three plus-edges 
between $f^+$ in $N^+$ and $f^-$ in $N^-$,
Claim~\ref{claim5} implies that $f^+$ is a special edge in $N$.
If $u^-$ is such that $\{ u^-\}=e^-\cap f^+$,
then Claim~\ref{claim4} implies that there are only minus-edges between $u^-$
and $V(M^+\setminus \{ e^+\})=V(N^+\setminus \{ f^+\})$.
Let $f_1$ be the plus-edge between $e^+$ and $e^-$ 
that is not incident with $u^-$.
Let $e\in M^+\setminus \{ e^+\}$.
Let $f_2$ and $f_3$ be two disjoint edges between $e^+\cup e^-$ 
and $e$ that are disjoint from $f_1$,
in particular, $f_2$ and $f_3$ are both minus-edges.
Now, the perfect matching 
$M'=(M\setminus \{ e^+,e^-,e\})\cup \{ f_1,f_2,f_3\}$
satisfies 
$\sigma(M')=\sigma(M)-2=0$,
contradicting the choice of $M$,
which completes the proof in this case.

\medskip

\noindent {\bf Case 2.} {\it Some good matching contains no special edge.}

\medskip

\noindent In this case, we may assume that the good matching $M$
chosen at the beginning of the proof contains no special edge.

First, suppose, for a contradiction,
that there are two edges $e_1$ in $M^+$ and $e_2$ in $M^-$
such that there are two disjoint plus-edges $f_1$ and $f_2$ 
between $e_1$ and $e_2$.
Since $e_1$ is not a special edge in $M$,
Claim~\ref{claim3} implies the existence of two edges 
$e_3$ and $e_4$ both in $M^+\setminus \{ e_1\}$
such that there are two disjoint minus-edges $f_3$ and $f_4$
between $e_3$ and $e_4$.
Now, the perfect matching 
$N=(M\setminus \{ e_1,e_2,e_3,e_4\})\cup \{ f_1,f_2,f_3,f_4\}$
satisfies $\sigma(N)=\sigma(M)-2=0$,
contradicting the choice of $M$.
Hence, there are no two such edges as $e_1$ and $e_2$.
Together with Claim~\ref{claim1}
this implies that there are exactly 
$\frac{1}{2}\cdot (2n+2)(2n-2)=2(n+1)(n-1)$
plus-edges as well as minus-edges 
between $V(M^+)$ and $V(M^-)$.

Next, suppose, for a contradiction, 
that the number of plus-edges in $V(M^-)$ is odd.
In this case, there are two edges $e_1$ and $e_2$ in $M^-$
and two disjoint edges $f_1$ and $f_2$ between $e_1$ and $e_2$
such that $f_1$ is a plus-edge and $f_2$ is a minus-edge.
By Claim~\ref{claim2} and Claim~\ref{claim3},
there are two edges $e_3$ and $e_4$ in $M^+$
and two disjoint minus-edges $f_3$ and $f_4$ between $e_3$ and $e_4$.
Again, the perfect matching 
$N=(M\setminus \{ e_1,e_2,e_3,e_4\})\cup \{ f_1,f_2,f_3,f_4\}$
satisfies $\sigma(N)=\sigma(M)-2=0$,
contradicting the choice of $M$.
Hence, the number of plus-edges in $V(M^-)$ is even, 
say equal to $2k$ for some integer $k$.
By (\ref{e1}), the number of minus-edges in $V(M^+)$ is
$$(4n^2-n)-2(n+1)(n-1)-\left({2n-2\choose 2}-2k\right)=4n+2k-1,$$
which is odd.
Nevertheless, by Claim~\ref{claim3},
the number of minus-edges in $V(M^+)$ is even,
which is a contradiction, and completes the proof.
\end{proof}
The following is based on a construction given at the end of~\cite{cahalaza}.

\begin{proof}[Proof of Proposition~\ref{thm:optimality}]
Let $n$ be such that there exists a positive even integer $k$ 
with $4n=k^2+4$. 
Let $(A,B)$ be a partition of the vertex set of $K_{4n}$ 
with $|A|=\frac{1}{2}(k^2+k)+2$ and $|B|=\frac{1}{2}(k^2-k)+2$. 
Now, we define a function $\sigma\colon E(K_{4n})\to\{-1,1\}$ 
such that all edges between $A$ and $B$ 
receive the value $1$ 
and all remaining edges receive the value $-1$.
Note that 
\begin{eqnarray*}
|\sigma^{-1}(1)| &=&
|A|\cdot |B| = \left(\frac{k^2+k}{2}+2\right)\left(\frac{k^2-k}{2}+2\right)
=
\frac{(k^2+4)(k^2+3)+4}{4}
=
\frac{1}{2}{4n\choose 2}+1 
\end{eqnarray*}
and thus,
$$\sigma\left(E(K_{4n})\right)
=|\sigma^{-1}(1)|-|\sigma^{-1}(-1)|
=\left(\frac{1}{2}{4n\choose 2}+1\right)-\left(\frac{1}{2}{4n\choose 2}-1\right)=2.$$
Now, suppose, for a contradiction, 
that $M$ is a perfect matching in $K_{4n}$ with $\sigma(M)=0$.
Clearly, $M$ contains $n$ plus-edges between $A$ and $B$. 
Hence, the number of vertices in $A$ 
that are not covered by a plus-edge in $M$ is
\begin{eqnarray*}
|A|-n=\left(\frac{k^2+k}{2}+2\right)-\frac{k^2+4}{4}
=\frac{k}{2}\left(\frac{k}{2}+1\right)+1.
\end{eqnarray*}
Since this is an odd number,
by construction, not all these vertices can be covered by minus-edges in $M$,
which is a contradiction, and completes the proof.
\end{proof}
For the proof of our second main result, Theorem~\ref{thm:2},
we need the following extremal result about matchings
due to Erd\H{o}s and Gallai \cite{erdo}.

\begin{theorem}[Erd\H{o}s and Gallai \cite{erdo}]\label{lemma1}
If $G$ is a graph of order $4n$ with matching number $n-k$ 
for some positive integer $k$, then the number of edges of $G$ is at most 
${4n\choose 2}-{3n+k\choose 2},$
with equality if and only if 
$G$ is the complement of the disjoint union 
of a complete graph of order $3n+k$
and $n-k$ isolated vertices.
\end{theorem}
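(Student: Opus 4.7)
I would deduce the bound from the Tutte--Berge formula. Since $G$ has order $4n$ and matching number $n-k$, Tutte--Berge gives
$$
\max_{S\subseteq V(G)}\bigl(o(G-S)-|S|\bigr)=4n-2(n-k)=2n+2k,
$$
where $o(H)$ denotes the number of odd components of $H$. I would fix a set $S$ of size $s$ attaining the maximum, so that $G-S$ has exactly $s+2n+2k$ odd components; in particular $s\le n-k$ since each odd component uses at least one vertex of $V(G)\setminus S$.

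Using $S$ I would bound
$$
e(G)\le\binom{s}{2}+s(4n-s)+\sum_{C}\binom{|C|}{2},
$$
where $C$ ranges over the components of $G-S$. A standard convexity argument (replacing two odd components of sizes $a,b\ge 3$ by one of size $a+b-1$ together with a singleton changes $\sum_C\binom{|C|}{2}$ by $(a-1)(b-1)>0$) shows that, among partitions of $4n-s$ vertices into $s+2n+2k$ odd components, $\sum_C\binom{|C|}{2}$ is maximized by $s+2n+2k-1$ singletons and one odd component of size $2n-2s-2k+1$. This yields
$$
e(G)\le f(s):=\binom{s}{2}+s(4n-s)+\binom{2n-2s-2k+1}{2}.
$$

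A short computation gives $f(s+1)-f(s)=3s+4k$, which is positive for $s\ge 0$ and $k\ge 1$. Hence $f$ is strictly increasing on $\{0,1,\dots,n-k\}$ and attains its maximum at $s=n-k$, where
$$
f(n-k)=\binom{n-k}{2}+(n-k)(3n+k)=\binom{4n}{2}-\binom{3n+k}{2},
$$
establishing the claimed edge bound.

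For the equality case I would trace back tightness in each inequality. Strict monotonicity of $f$ forces $s=n-k$; at this value the ``large'' component has size $1$, so every component of $G-S$ is a singleton and $V(G)\setminus S$ is an independent set of size $3n+k$. Tightness of the first two terms of the crude edge bound then forces $G[S]$ to be a clique on $n-k$ vertices and all edges between $S$ and $V(G)\setminus S$ to be present. Consequently, $G$ is exactly the complement of the disjoint union of $K_{3n+k}$ and $n-k$ isolated vertices, as claimed. The main conceptual step is extracting the right parameterization from Tutte--Berge; the rest is convexity and direct algebra.
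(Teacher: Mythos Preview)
The paper does not prove this statement; it is quoted as a classical result of Erd\H{o}s and Gallai and used as a black box in the proof of Theorem~\ref{thm:2}. So there is no ``paper's proof'' to compare against, and your proposal should be judged on its own merits.

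Your argument via Tutte--Berge is essentially correct and is a standard modern route to Erd\H{o}s--Gallai type bounds. The parametrization by $s=|S|$, the convexity step, the computation $f(s+1)-f(s)=3s+4k$, and the evaluation $f(n-k)=\binom{4n}{2}-\binom{3n+k}{2}$ all check out, and your equality analysis is sound.

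One small gap: when you bound $\sum_C\binom{|C|}{2}$ you restrict attention to ``partitions of $4n-s$ vertices into $s+2n+2k$ odd components,'' but $G-S$ may also have even components. You should remark that merging an even component with an odd one preserves the number of odd components while strictly increasing $\sum_C\binom{|C|}{2}$, so the maximum over admissible component structures is indeed attained with no even components; alternatively, simply use that $G-S$ has at least $s+2n+2k$ components in total and that $\sum_C\binom{|C|}{2}$ over any partition of $N$ vertices into at least $t$ parts is at most $\binom{N-t+1}{2}$. Either fix is one line, and with it your proof is complete.
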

We proceed to the proof of our second main result.

\begin{proof}[Proof of Theorem~\ref{thm:2}]
Let $\sigma\colon E(K_{4n})\to\{-1,1\}$ 
be such that $\sigma\left(E(K_{4n})\right)\geq 0$
and $|\sigma(M)|\geq 2k\geq 4$ 
for every perfect matching $M$ in $K_{4n}$.
As observed above, $|\sigma(M)|$ is even 
for every perfect matching $M$ in $K_{4n}$.
Therefore, it remains to show that 
$\sigma\left(E(K_{4n})\right)$
is at least the term stated in the theorem.

We distinguish the following two cases.

\medskip

\noindent {\bf Case 1.} {\it $\sigma(M)\leq 0$
for some perfect matching $M$ in $K_{4n}$.
}

\medskip

\noindent We choose a perfect matching $M$ in $K_{4n}$
with $\sigma(M)\leq 0$ 
such that $\sigma(M)$ is as large as possible.
Since $\sigma(M)\leq -2k\leq -4$,
we obtain $|M^-|\geq n+k$ and $|M^+|\leq n-k$,
where we use the notation and terminology 
as in the proof of Theorem~\ref{thm:main}.
The following two observations correspond to 
Claim~\ref{claim1} and Claim~\ref{claim2}
within the proof of Theorem~\ref{thm:main}.

If there is an edge $e$ in $M^+$ and an edge $f$ in $M^-$
such that there are two disjoint plus-edges $e'$ and $f'$ 
between $e$ and $f$,
then the perfect matching 
$N=(M\setminus \{ e,f\})\cup \{ e',f'\}$
satisfies $\sigma(M)<\sigma(N)=\sigma(M)+2<0$,
contradicting the choice of $M$.
Hence, between every edge in $M^+$ and every edge in $M^-$,
there are at least two minus-edges, 
which implies that at least half the edges between 
$V(M^+)$ and $V(M^-)$ are minus-edges.

If there is no plus-edge in $V(M^-)$,
then there are stricly more minus-edges than plus-edges,
contradicting $\sigma\left(E(K_{4n})\right)\geq 0$.
Hence, there are two edges $e$ and $f$ in $M^-$ 
such that there exists a plus-edge $e'$ between $e$ and $f$.
Let $f'$ be the edge between $e$ and $f$ that is disjoint from $e'$.
The perfect matching $N=(M\setminus \{ e,f\})\cup \{ e',f'\}$
satisfies  $\sigma(M)<\sigma(N)\leq \sigma(M)+4\leq -2k+4\leq 0$,
contradicting the choice of $M$.
This contradiction completes the proof in this case.

\medskip

\noindent {\bf Case 2.} {\it $\sigma(M)>0$
for every perfect matching $M$ in $K_{4n}$.}

\medskip

\noindent Note that $\sigma(M)\geq 2k$
for every perfect matching $M$ in $K_{4n}$.
Let $\nu$ be the matching number of the graph
$G=\left(V(K_{4n}),\sigma^{-1}(-1)\right)$.

Suppose, for a contradiction, that $\nu>n-k$.
If $M$ is a maximum matching in $G$,
then $V(G)\setminus V(M^-)$ is an independent set in $G$.
This implies that there is a matching $M^+$ in 
$\left(V(K_{4n}),\sigma^{-1}(1)\right)$
covering all vertices in $V(G)\setminus V(M^-)$.
Note that $|M^+|=2n-\nu<n+k$,
and, hence, $M^-\cup M^+$ is a perfect matching in $K_{4n}$
with $\sigma(M^-\cup M^+)<-(n-k)+(n+k)=2k$,
which is a contradiction.
Hence, we have $\nu\leq n-k$.

Let $\nu=n-k'$ for some integer $k'\geq k$.
By Theorem~\ref{lemma1}, we obtain
\begin{eqnarray*}
\sigma\left(E(K_{4n})\right) & = & {4n\choose 2}-2m(G)
\geq{4n\choose 2}-2\left({4n\choose 2}-{3n+k'\choose 2}\right)
=2{3n+k'\choose 2}-{4n\choose 2}\\
& \geq & 2{3n+k\choose 2}-{4n\choose 2}
=n(n-1)+k(6n-1)+k^2,
\end{eqnarray*}
which completes the proof.
\end{proof}


\begin{thebibliography}{}
\bibitem{ca} Y. Caro, Zero-sum problems -- a survey, Discrete Mathematics 152 (1996) 93-113.
\bibitem{cahalaza} Y. Caro, A. Hansberg, J. Lauri, and C. Zarb, On zero-sum spanning trees and zero-sum connectivity, arXiv:2007.08240v1.
\bibitem{cayu} Y. Caro and R. Yuster, On zero-sum and almost zero-sum subgraphs over $\mathbb{Z}$, Graphs and Combinatorics 32 (2016) 49-63.
\bibitem{erdo} P. Erd\H{o}s and T. Gallai, On maximal paths and circuits of graphs, Acta Mathematica Academiae Scientiarum Hungarica 10 (1959) 337-356.
\end{thebibliography}
\end{document}